\theoremstyle{plain}
 \newtheorem{thm}{Theorem}[section]
 \newtheorem{lem}{Lemma}[section]
\theoremstyle{definition}
\theoremstyle{remark}
 \numberwithin{equation}{section}
 \newcommand\mycom[2]{\genfrac{}{}{0pt}{}{#1}{#2}}
\renewcommand{\leq}{\leqslant}
\renewcommand{\geq}{\geqslant}
\title[Derivatives of Hardy's function]{A NEW BOUND FOR THE ERROR TERM IN THE APPROXIMATE FUNCTIONAL EQUATION FOR THE DERIVATIVES OF THE HARDY'S Z-FUNCTION}
\subjclass[2010]{11M06}
\keywords{Riemann zeta function, Hardy's function.}
\author[Blanc]{\bfseries Philippe Blanc}
\address{
D\'{e}partement des technologies industrielles \\ 
Haute \'{E}cole d'Ing\'{e}nierie et de Gestion  \\ 
CH-1400 Yverdon-les-Bains\\
Switzerland}
\email{philippe.blanc@heig-vd.ch}
\begin{document}
\vspace{18mm}
\setcounter{page}{1}
\thispagestyle{empty}

\begin{abstract}
Lavrik and the author gave uniform bounds of the error term in the approximate functional equation for the derivatives of the Hardy's Z-function. We obtain a new bound of this error term which is much better for high order derivatives.
\end{abstract}

\maketitle


\section{Introduction and main result}
\label{}
Let $\zeta$ be the Riemann zeta function, and $Z$ the Hardy function defined by
\[
Z(t)=e^{i\theta(t)}\zeta \left(\frac{1}{2}+it\right)
\]
where
\begin{equation}\label{theta}
\theta(t)=\arg\left( \pi^{-i\frac{t}{2}}\;\Gamma\left (\frac{1}{4}+i\frac{t}{2}\right)\right)
\end{equation}
and the argument is defined by continuous variation of $t$ starting with the value $0$ at $t=0$. The real zeros of $Z$ coincide with the zeros of $\zeta $ located on the line of real part $\frac{1}{2}$. The function $\theta$ plays a central role in this paper and it is important to mention \cite{Ivic2} that 
\begin{equation}\label{thetaprop}
\theta(t) = \frac{t}{2}\log \frac{t}{2\pi} - \frac{t}{2} - \frac{\pi}{8}+
O\left (t^{-1}\right)\,
\end{equation}
and 
\begin{equation}\label{thetapropder}
\theta'(t) = \frac{1}{2}\log \frac{t}{2\pi} +
O\left (t^{-2}\right).\,
\end{equation}
A weak form of the celebrated Riemann-Siegel formula \cite{Ivic2} asserts that
\[
Z(t)=2\hspace{-0.2 cm} \sum_{1\leq n \leq \sqrt{\frac{t}{2\pi}}}\frac{1}{\sqrt{n}}\cos(\theta(t)-t\log n)+O\left (t^{-\frac{1}{4}}\right )
\]
and, concerning the derivatives of $Z$, the approximate functional equation reads
\[
Z^{(k)}(t) = 2\hspace{-2mm}\sum_{1\leq n \leq \sqrt{\frac{t}{2\pi}}}\frac{1}{\sqrt{n}}\left ( \theta'(t)-\log n\right)^{k}\cos(\theta(t)-t\log n+k\frac{\pi}{2})+R_k(t)
\]
where $R_k(t)$ is the error term. Of particular interest is the set of integers $k$, which depends on $t$,  such that, uniformly in $k$,
\begin{equation}\label{R}
R_k(t)=o(\theta'(t)^k)\,\,\mbox{as}\,\,t\to\infty
\end{equation}
which means that $R_k(t)$ is a true error term.\\
Lavrik \cite{Lav1} proved that (\ref{R}) holds for $0\leqslant k \leqslant c\hspace{0.5 mm}\theta'(t)$ where $c<\frac{1}{2\log 3}=0.4551...$, the author \cite{PBC1} extended this result to $c< 1.7955...$ and numerical experiments suggested that (\ref{R}) is probably true for larger $k$. For example $\vert R_k(10^4)\vert\leqslant 0.05\, \theta'(10^4)^k$ for $k=1,\ldots,117.$\\
In this paper, we prove that (\ref{R}) holds for $0\leqslant k \leqslant c\hspace{0.5 mm}\theta'(t)^2$ where $c<3$, which is a consequence of Theorem \ref{MainTheorem}. To simplify its proof, and since the case $0\leqslant k < \theta'(t)$ is covered by~\cite{PBC1}, we restrict our attention to the case $k\geqslant\theta'(t)$.
\begin{thm}\label{MainTheorem}
Let $t$ be large enough and $c>1$ be a fixed constant. Then, for $\theta'(t)\leqslant k\leqslant 3\hspace{0.2 mm}\theta'(t)^2$, we have, uniformly in $k$,
\[
R_k(t)=O\left(t^{-\frac{1}{2}}\,c^{\frac{k}{\theta'(t)}}\theta'(t)^k+t^{-\frac{3}{4}}e^{\frac{k}{2\theta'(t)}}k\hspace{0.5 mm}\theta'(t)^{k-1}\right).
\]
\end{thm}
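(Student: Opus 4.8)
The plan is to work not with the weak Riemann--Siegel formula quoted above but with its exact form. Fix an interval on which $N=\lfloor\sqrt{t/2\pi}\,\rfloor$ is constant, and on it write
\[
Z(t)=A_N(t)+\rho_N(t),\qquad A_N(t)=2\sum_{n=1}^{N}\frac{\cos(\theta(t)-t\log n)}{\sqrt n},
\]
where $\rho_N$ is Siegel's remainder, given explicitly as $e^{-i\theta(t)}$ times a slowly varying amplitude of size $O(t^{-1/4})$ times a contour integral whose integrand carries the factor $x^{-\frac12-it}$ and whose decisive portion is a short arc near the point $x\asymp\sqrt{t/2\pi}$, where $\log x=\theta'(t)+O(1/\theta'(t))$. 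On such an interval $Z$, $A_N$ and $\rho_N$ are real-analytic, so $Z^{(k)}(t)=A_N^{(k)}(t)+\rho_N^{(k)}(t)$, and the proof reduces to analysing these two pieces.

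I would first dispose of $A_N^{(k)}(t)$. Writing $\cos(\theta(t)-t\log n)=\operatorname{Re}e^{\,i(\theta(t)-t\log n)}$ and differentiating $k$ times by the Fa\`a di Bruno formula, the single term in which every derivative falls on the linear part $\theta'(t)-\log n$ equals $(\theta'(t)-\log n)^{k}\cos(\theta(t)-t\log n+k\pi/2)$; summing these over $1\le n\le N$ is, for integer $n$, the same as summing over $n\le\sqrt{t/2\pi}$, so this part is \emph{precisely} the main sum of the approximate functional equation. Every other Fa\`a di Bruno term carries at least one factor $\theta^{(j)}(t)$ with $j\ge2$. Using $\theta^{(j)}(t)\ll(j-2)!\,t^{-(j-1)}$ (obtained by differentiating \eqref{thetaprop}), $|\theta'(t)-\log n|\le\theta'(t)$ for $n\le N$, the elementary bound $\sum_{n\le N}n^{-1/2}\ll t^{1/4}$, and the usual count $k!\big/\!\prod_j(j!)^{a_j}a_j!$ of partitions of a given block type, a short computation shows that the block type with one block of size $2$ and the rest singletons dominates all the others and that the whole collection of these terms contributes $\ll t^{-3/4}k^{2}\theta'(t)^{k-2}$. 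Since $e^{x}\ge ex$ gives $e^{k/2\theta'(t)}k\,\theta'(t)^{k-1}\gg k^{2}\theta'(t)^{k-2}$, this is already contained in the second term of the statement, so that $R_k(t)=\rho_N^{(k)}(t)+O\!\big(t^{-3/4}e^{k/2\theta'(t)}k\,\theta'(t)^{k-1}\big)$.

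The core of the argument is the estimate for $\rho_N^{(k)}(t)$, and here the tempting shortcut $\rho_N^{(k)}(t)=\frac{k!}{2\pi i}\oint_{|z-t|=\varrho}\rho_N(z)(z-t)^{-k-1}\,dz$ is useless: the analytic continuation $\rho_N(z)$ grows faster than every power of $t$ as $|\operatorname{Im}z|$ increases, so no radius $\varrho$ gives a bound of the required strength. Instead I would differentiate the integral representation of $\rho_N$ itself, $k$ times under the integral sign. Each differentiation acts on $e^{-i\theta(t)}$ (producing $-i\theta'(t)$, the higher derivatives of $\theta$ being negligible), on the amplitude (producing bounded multiples of $\theta'(t)$ from $(2\pi)^{it}$, $\Gamma(\tfrac12-it)$ and so on), and on $x^{-it}$ (producing $-i\log x$); grouping these by the Leibniz rule expresses $\rho_N^{(k)}(t)$ as the original integral times the $k$-th power of a fixed linear combination of $\theta'(t)$, $\log(2\pi)$, $\psi(\tfrac12-it)$ and $\log x$. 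On the decisive arc this combination is $\theta'(t)\big(1+O(1/\theta'(t))\big)$, so raising it to the $k$-th power costs only $\theta'(t)^{k}\big(1+O(1/\theta'(t))\big)^{k}=\theta'(t)^{k}\,O\!\big(c^{k/\theta'(t)}\big)$ for every fixed $c>1$; keeping track of the amplitude and of the real part then produces the first term $O\!\big(t^{-1/2}c^{k/\theta'(t)}\theta'(t)^{k}\big)$, while the remainder of the contour, on which the integrand is exponentially small, is handled by trivial estimates after differentiation and contributes a quantity dominated by the second term already isolated above. Collecting the two contributions gives Theorem~\ref{MainTheorem}.

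The main obstacle is precisely this last step. The point is to keep the cost of each differentiation of $\rho_N$ down to $\theta'(t)\big(1+O(1/\theta'(t))\big)$ rather than a fixed constant multiple of $\theta'(t)$; any cruder bookkeeping replaces the harmless factor $c^{k/\theta'(t)}$ by something like $c^{k}$, which already ruins the result for $k\asymp\theta'(t)$. It is exactly the refinement that the per-derivative cost is $\theta'(t)$ up to a multiplicative $1+O(1/\theta'(t))$ --- visible only after one differentiates inside the Riemann--Siegel integral and localises to the arc near $x\asymp\sqrt{t/2\pi}$ --- that, accumulated $k$ times, enlarges the admissible range of $k$ from $O(\theta'(t))$ to $O(\theta'(t)^{2})$.
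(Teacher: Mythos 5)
Your treatment of the main sum is essentially fine, and your route is genuinely different from the paper's: you want the exact Riemann--Siegel formula $Z=A_N+\rho_N$ and a direct $k$-fold differentiation of Siegel's integral, whereas the paper never touches that integral; it writes $Z^{(k)}$ via Fa\`a di Bruno in terms of the functions $\eta_p(\theta'(t),\tfrac12+it)$, proves an approximate functional equation for each $\eta_p$ by splitting the Dirichlet sum at $e^{\theta'(t)}$ and $c\,e^{2\theta'(t)}$, transforming the middle range by a van der Corput-type lemma and the tail by Euler--MacLaurin, and then absorbs the lower-order terms through a combinatorial bound on the $q_p$. Your analysis of $A_N^{(k)}$ (dominance of the single block of size two, contribution $\ll t^{-3/4}k^{2}\theta'(t)^{k-2}$, absorbed in the second error term) is plausible and could be made rigorous in the stated range of $k$.

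The decisive step, however --- the bound for $\rho_N^{(k)}(t)$ --- is asserted, not proved, and as described it does not deliver the theorem. First, differentiating the product of $e^{-i\theta(t)}$, the amplitude and $x^{-it}$ $k$ times does not give ``the original integral times the $k$-th power of a fixed linear combination''; it gives, inside the integral, a full Leibniz/Fa\`a di Bruno sum, and the assertion that this collapses to $\bigl(\theta'(t)(1+O(1/\theta'(t)))\bigr)^{k}$ is precisely what has to be established. Second, the sign structure is critical and unverified: near the saddle one has $\log x\approx\theta'(t)$, and the phase factors coming from $x^{-it}$ and from $e^{\mp i\theta(t)}$ (together with $\psi(\tfrac12-it)\approx\log t\approx 2\theta'(t)$) can just as well add up to $\approx 2\theta'(t)$ per derivative as reduce to $\theta'(t)$; a per-derivative cost $2\theta'(t)$ gives $2^{k}\theta'(t)^{k}$, which already ruins the result at $k\asymp\theta'(t)$. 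Exhibiting the cancellation is the whole analytic content (indeed the true derivatives of $\rho_N$ are tiny: the leading Riemann--Siegel term is $\asymp t^{-1/4}\Psi(z(t))$ with $z'(t)\asymp t^{-1/2}$), and you never compute the actual linear combination. Third, even granting your per-derivative cost, the mechanism yields at best $|\rho_N^{(k)}(t)|\ll t^{-1/4}c^{k/\theta'(t)}\theta'(t)^{k}$, since by your own description the amplitude of Siegel's remainder is $O(t^{-1/4})$; the factor $t^{-1/2}$ in the first term of the theorem is nowhere accounted for (``keeping track of the amplitude and of the real part'' is not an argument). Such a $t^{-1/4}$ bound would still give $R_k=o(\theta'(t)^{k})$ in the intended range, but it would not prove the stated theorem. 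As it stands the proposal is a programme, with the uniform-in-$k$ control of the differentiated Riemann--Siegel integral missing.
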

 The notations used in this paper are standard : $\lfloor x\rfloor$ and $\lceil x\rceil$ stand for the usual floor and ceiling functions and $\{x\}:=x-\lfloor x\rfloor$.
  We denote by $(x)_n$ the Pochhammer symbol defined by $(x)_0=1$ and $(x)_n=x(x+1)\cdots(x+n-1)$ for $n\in\mathbb{N}^*$.\\
 In the next section, we recall some results of the author used in the proof of Theorem~1 of \cite{PBC1} and we state the lemmas needed in the proof of our main result. Section 3 is devoted to the proofs. 
 \section{Preliminary results}
The functions $\eta_p(d,s):=\sum_{n=1}^{\infty}n^{-s}(d-\log n)^p$ defined for $d\in\mathbb{R}$, $p\in\mathbb{N}$ and~$\Re(s)>~1$ have a meromorphic extension to $\Re(s)>0$ with a pole at $s=1$ and, as a consequence of the Fa\`a di Bruno formula \cite{Goursat}, we have
\begin{equation}\label{Zk}
  Z^{(k)}(t)=e^{i\theta(t)}i^k\eta_k(\theta'(t),\frac{1}{2}+it)+e^{i\theta(t)}\sum_{p=0}^{k-2} q_p(t)i^p\eta_p(\theta'(t),\frac{1}{2}+it) 
  \end{equation}
  where
    \begin{equation}\label{qp}
      q_p(t)\,=\hspace{-0.4 cm} \sum_{\mycom{2p_2+3p_3+\ldots+k p_k=k-p}{ p_2\geq 0,\, p_3\geq 0,\ldots,\, p_k\geq 0}}\frac{k!}{p!p_2!\,\cdots\, p_k!}\left(\frac{i\,\theta''(t)}{2!}\right)^{p_2}
       \! \left(\frac{i\,\theta'''(t)}{3!}\right)^{p_3}\!\cdots\left(\frac{i\,\theta^{(k)}(t)}{k!}\right)^{p_k}\!\!\!\!.
     \end{equation}
The first step in our proof is to get an approximate functional equation for the functions $\eta_p(\theta'(t),\frac{1}{2}+it)$. In \cite{PBC1}, we proved that the functions $\tilde{\eta}_p(d,s):=(-1)^p\eta_p(d,s)$ satisfy, for $d=\theta'(t)$ and $s=\frac{1}{2}+it$, the relation 
    \begin{eqnarray} \label{B}
    \tilde{\eta}_p(d,s)&=&\sum_{1 \leq n \leq N}\phi_p(n)+\frac{N^{1-s}}{s-1}(\log N -d)^p\, p!\,\sum_{l=0}^{p}\frac{((s-1)(\log N -d ))^{-l}}{(p-l)!}\nonumber\\
    &+&O\left( (1+ \vert t \vert )N^{-\frac{1}{2}}\log^p N \right)
   \end{eqnarray}
   where $\phi_p(x):=x^{-s}(\log x -d)^p$.\\
In this paper, we fix a constant $c>1$ and for $t$ sufficiently large, we set  $N_0=\lceil e^{d}\rceil  $, $N_1=\lfloor c\hspace{0.2mm}e^{2d}\rfloor$ and for $N>N_1$ we split the sum
    \begin{equation}\label{Sum}
    \sum_{1 \leq n \leq N}\phi_p(n)=\sum_{1 \leq n \leq N_0}\phi_p(n)+\sum_{N_0< n\leq N_1 }\phi_p(n)+\sum_{N_1 < n \leq N}\phi_p(n).
   \end{equation}
We use Lemmas \ref{lemquinties} \cite{PBC2,PBC3} and \ref{lemquaterbis}  to transform the second sum in a short sum (Lemma \ref{appeta}), and Lemma \ref{MacLaurin} to apply the Euler-MacLaurin formula to the third sum (Lemma \ref{thirdsum}).
\begin{lem}\label{lemquinties}
 Let $a < b$ be integers and let $\varphi\in C^2[a,b]$ and $f\in C^5[a,b]$ be real-valued functions possessing the following property: There exist constants $H>0$, $U\geq b-a$ and $1\ll A \ll U$ satisfying
  \[
 f''(x)\asymp  A^{-1},\hspace{2mm} f'''(x)\ll A^{-1}U^{-1},\hspace{2mm}f^{(4)}(x)\ll A^{-1}U^{-2},\hspace{2mm} f^{(5)}(x)\ll A^{-1}U^{-3}
 \]
 \[
  \varphi (x)\ll H,\hspace{2mm} \varphi'(x)\ll H U^{-1},\hspace{2mm} \varphi''(x)\ll H U^{-2}
 \]
 for all $x\in [a,b]$.\\
 Let furthermore $\Theta$ be the function defined on $ ]0,\infty[\times [0,1]$ by
 \[
 \Theta(\lambda,\mu)=i\int\limits_0^{\infty}\frac{\sinh(2\pi(\mu-\frac{1}{2})x)}{\sinh (\pi x)}e^{-i\pi\lambda x^2}\,dx
 \]
 and $x(\cdot)$ be the unique function defined by $f'(x(y))=y$ for all $y\hspace{-0.7mm}\in\hspace{-0.7mm}[f'(a),\hspace{-0.05mm}f'(b)]$. Then
 \begin{eqnarray}\label{transform}
 \sum_{a<n\leq b}\varphi(n)e^{2\pi i f(n)}&=&e^{i\frac{\pi}{4}}\sum_{f'(a)<n\leq f'(b)}\frac{\varphi(x(n))}{\sqrt{f''(x(n))}}e^{2\pi i(f(x(n))-nx(n))}\\
 &+&R(b)-R(a)+O(H)\nonumber
  \end{eqnarray}
  where
  \[
  R(l)=\varphi(l)e^{ 2\pi i f(l)}\Theta(f''(l),\{f'(l)\}).
  \]
  \end{lem}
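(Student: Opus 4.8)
The plan is to establish Lemma~\ref{lemquinties} as an instance of the van der Corput B-process: convert the exponential sum into a dual sum of exponential integrals by Poisson summation, and then evaluate each integral by the saddle-point method. First I would extend $\varphi\,e^{2\pi i f}$ by zero outside $[a,b]$ and apply the Poisson summation formula, with the usual convention at the integer endpoints, to write
\[
\sum_{a<n\le b}\varphi(n)e^{2\pi i f(n)}=\sum_{\nu\in\mathbb Z}I_\nu,\qquad I_\nu=\int_a^b\varphi(x)\,e^{2\pi i(f(x)-\nu x)}\,dx .
\]
Since $f''\asymp A^{-1}$ is of constant sign, $f'$ is strictly monotone on $[a,b]$, so the phase $f(x)-\nu x$ has a unique stationary point $x(\nu)$, determined by $f'(x(\nu))=\nu$, precisely when $\nu\in[f'(a),f'(b)]$. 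This is exactly what produces the dual summation range appearing in (\ref{transform}).

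For each $\nu$ whose stationary point $x(\nu)$ lies well inside $(a,b)$, I would apply the stationary-phase approximation
\[
I_\nu=e^{i\frac{\pi}{4}}\,\frac{\varphi(x(\nu))}{\sqrt{f''(x(\nu))}}\,e^{2\pi i(f(x(\nu))-\nu x(\nu))}+E_\nu ,
\]
the error $E_\nu$ arising from the non-quadratic part of the phase and the variation of the amplitude near $x(\nu)$. Here the hypotheses enter decisively: the bounds $f'''\ll A^{-1}U^{-1}$, $f^{(4)}\ll A^{-1}U^{-2}$ and $f^{(5)}\ll A^{-1}U^{-3}$ control the Taylor remainder of $f$ about the saddle, while $\varphi\ll H$, $\varphi'\ll HU^{-1}$ and $\varphi''\ll HU^{-2}$ control that of $\varphi$; combining these estimates makes the total contribution $\sum_\nu E_\nu$ of size $O(H)$. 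For $\nu\notin[f'(a),f'(b)]$ there is no stationary point, so repeated integration by parts, using that $f'(x)-\nu$ stays bounded away from $0$, shows that these integrals decay and that their sum is likewise absorbed into the $O(H)$ term.

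The delicate part is the transition regime, where the stationary point $x(\nu)$ lies close to an endpoint $l\in\{a,b\}$ and the plain stationary-phase formula is no longer valid. I would isolate the \emph{incomplete} saddle-point contribution at each endpoint: near $l$ the phase is well approximated by $f(l)+f'(l)(x-l)+\tfrac12 f''(l)(x-l)^2$, and summing the resulting truncated Gaussian integrals over the integers $\nu$ near $f'(l)$ produces a universal endpoint factor depending only on the curvature $f''(l)$ and on the position of $f'(l)$ relative to the integers, that is, on $\{f'(l)\}$. Performing this summation explicitly --- conveniently via the Mittag--Leffler (partial-fraction) expansion of $1/\sinh$, equivalently a residue computation --- collapses the series into the closed form
\[
\Theta\bigl(f''(l),\{f'(l)\}\bigr)=i\int_0^\infty\frac{\sinh\!\bigl(2\pi(\{f'(l)\}-\tfrac12)x\bigr)}{\sinh(\pi x)}\,e^{-i\pi f''(l)x^2}\,dx ,
\]
which gives exactly $R(l)=\varphi(l)e^{2\pi i f(l)}\Theta(f''(l),\{f'(l)\})$.

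I expect this endpoint analysis to be the main obstacle: one must follow the saddle as it crosses $a$ and $b$, control the interaction between the incomplete Gaussian and the slowly varying amplitude $\varphi$ (again using the bounds on $\varphi'$ and $\varphi''$), and verify that the series over the integers $\nu$ near each endpoint converges \emph{uniformly} to the stated function $\Theta$. Once the interior main term, the negligible off-range integrals, and the two endpoint corrections $R(a)$ and $R(b)$ are assembled, the transformation formula (\ref{transform}) follows.
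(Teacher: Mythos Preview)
The paper does not actually prove this lemma; it is quoted from the author's earlier work \cite{PBC2,PBC3} and used as a black box, so there is no in-paper proof to compare your proposal against.

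That said, your outline is the standard van der Corput $B$-process (Poisson summation followed by stationary phase, with explicit boundary corrections), which is the natural route to a transformation formula of this shape, and the hypotheses on $f''',f^{(4)},f^{(5)}$ and on $\varphi,\varphi',\varphi''$ are precisely what one needs to make the interior stationary-phase errors sum to $O(H)$. Judging from the titles of the cited references, the author's original argument proceeds via a quadratic-spline approximation of the phase rather than a direct Poisson/endpoint-crossing analysis; in that framework the function $\Theta$ appears as the exact evaluation of the piecewise-quadratic model at the boundary, which sidesteps the delicate ``saddle crossing the endpoint'' uniformity issue you flag. Your approach would also work, but the endpoint analysis you correctly identify as the main obstacle is genuinely the hard part, and your sketch stops short of carrying it out; in particular, the passage from the incomplete Gaussian sums near $l\in\{a,b\}$ to the closed integral form of $\Theta(f''(l),\{f'(l)\})$ with only an $O(H)$ discrepancy would need to be made precise.
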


 \begin{lem}\label{lemquaterbis}
 For  $d\in\mathbb{R}$ and $p\in \mathbb{N}$, let $\varphi_p$ be the function defined by $ \varphi_p(x)=x^{-\frac{1}{2}}(\log x - d)^p$ for $x\geqslant 1$. Then, for $2\leqslant d\leqslant p$, $e^d\leqslant a< c\hspace{0.2mm}e^{2d}$ and $x\in[a,(2a)^*]$ where $(2a)^*= \min(2a,c\hspace{0.2mm}e^{2d})$, we have
 \[
  \varphi_p(x)\ll H,\hspace{1mm}\varphi_p'(x)\ll Ha^{-1},\hspace{1mm} \varphi_p''(x)\ll Ha^{-2}
 \]
 where $H=p^2\varphi_{p-2}((2a)^*)$.
 \end{lem}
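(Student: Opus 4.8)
The plan is to differentiate $\varphi_p$ twice, rewrite everything in terms of the single quantity $u:=\log x-d$, control the size of $u$ on the interval in question, and then match the resulting polynomial expressions in $u$ and $p$ against the shape of $H=p^2\varphi_{p-2}((2a)^*)=p^2((2a)^*)^{-1/2}v^{p-2}$, where $v:=\log((2a)^*)-d$.

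First I would collect the elementary facts about the range. From $e^d\le a<c\,e^{2d}$, the definition $(2a)^*=\min(2a,c\,e^{2d})$, and $x\in[a,(2a)^*]$ one gets $a\le x\le(2a)^*\le 2a$, hence $((2a)^*)^{-1/2}\le x^{-1/2}\le\sqrt2\,((2a)^*)^{-1/2}$, $x^{-1}\le a^{-1}$, and $x^{-2}\le a^{-2}$. Moreover $0\le u\le v\le\log c+d$, and since $c$ is fixed and $2\le d\le p$ this gives $u\le v\le\log c+d\ll p$, with an implied constant depending only on $c$. (Also $(2a)^*\ge 2e^d$, so $v\ge\log 2>0$ and $H>0$; the bounds to be proved are therefore not vacuous.)

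A direct computation yields
\[
\varphi_p'(x)=x^{-3/2}u^{p-1}\left(p-\frac{u}{2}\right),\qquad
\varphi_p''(x)=x^{-5/2}\left(\frac34 u^{p}-2p\,u^{p-1}+p(p-1)u^{p-2}\right),
\]
and the three estimates follow by pulling a factor $x^{-1/2}u^{p-2}$ out of each expression, then using $u^{p-2}\le v^{p-2}$ (valid since $0\le u\le v$) and $x^{-1/2}\le\sqrt2\,((2a)^*)^{-1/2}$. Indeed, $\varphi_p(x)=x^{-1/2}u^{p-2}\cdot u^{2}$ with $u^2\le(\log c+d)^2\ll p^2$, so $\varphi_p(x)\ll((2a)^*)^{-1/2}v^{p-2}p^2=H$. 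Next, $\varphi_p'(x)=x^{-1}\cdot x^{-1/2}u^{p-2}\cdot u\left(p-\frac u2\right)$ and, since $0\le u\ll p$, the last factor has absolute value $\le u\left(p+\frac u2\right)\ll p^2$, whence $|\varphi_p'(x)|\ll a^{-1}((2a)^*)^{-1/2}v^{p-2}p^2=Ha^{-1}$. Finally, $\varphi_p''(x)=x^{-2}\cdot x^{-1/2}u^{p-2}\left(\frac34 u^2-2p\,u+p(p-1)\right)$ and the last factor is $\ll p^2$ in absolute value because $u\ll p$, so $|\varphi_p''(x)|\ll a^{-2}((2a)^*)^{-1/2}v^{p-2}p^2=Ha^{-2}$.

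The argument is entirely routine; the only point needing a little care is the bookkeeping. One must factor out $u^{p-2}$ and not $u^{p}$, so that the leftover polynomial has degree at most two in both $u$ and $p$ --- which is exactly what matches the factor $p^2$ in $H$ --- and one must check that every implied constant depends on $c$ alone, and not on $p$, $d$, $a$, or $x$.
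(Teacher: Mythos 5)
Your proof is correct and follows essentially the same route as the paper's: compute $\varphi_p'$ and $\varphi_p''$ explicitly, use that $\log x-d\leqslant d+\log c\ll p$ on the range in question to absorb the leftover quadratic factor into $p^2$, and use monotonicity of $x^{-1/2}$ and of $(\log x-d)^{p-2}$ to pass from $x$ to $(2a)^*$ at the cost of a factor $\sqrt2$. The only cosmetic difference is that the paper phrases the intermediate step as the chain $\varphi_p(x)\ll p\,\varphi_{p-1}(x)\ll p^2\varphi_{p-2}(x)$ rather than factoring out $u^{p-2}$ directly.
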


 \begin{lem}\label{appeta}
 Let $t$ be large enough and assume that $\theta'(t)\leqslant p\ll t^{\frac{1}{2}}$. Then
 \begin{equation}\label{secondsum}
\sum_{N_0 < n\leq N_1 }\!\!\!\!\!\!\phi_p(n)\!=\!\,e^{-2 i\theta(t)}\sum_{1 \leqslant n \leqslant N_0}\frac{(\theta'(t)-\log n )^p}{n^{\frac{1}{2}-it}}+O\left(t^{-\frac{1}{2}}p^2\,c^{\frac{p}{\theta'(t)}}\theta'(t)^{p-2}\right).
 \end{equation}
 \end{lem}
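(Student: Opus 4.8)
Here is how I would attack Lemma~\ref{appeta}.

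The plan is to treat $\sum_{N_0<n\le N_1}\phi_p(n)$ as a long exponential sum and turn it, via Lemma~\ref{lemquinties}, into a short one. Writing $d=\theta'(t)$, set $f(x)=-\frac{t}{2\pi}\log x$ and $\varphi_p(x)=x^{-1/2}(\log x-d)^p$, so that $\phi_p(n)=\varphi_p(n)e^{2\pi i f(n)}$ and, on the range $n>N_0=\lceil e^d\rceil$, the amplitude $\varphi_p$ is real and positive, as Lemma~\ref{lemquinties} requires. I would split $(N_0,N_1]$ into $J=O(d)$ consecutive blocks of integers $(m_j,m_{j+1}]$ with $m_0=N_0$, $m_{j+1}=\min(2m_j,N_1)$, $m_J=N_1$, and apply Lemma~\ref{lemquinties} on each $[m_j,m_{j+1}]\subseteq[m_j,2m_j]$. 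There $f''(x)\asymp t\,m_j^{-2}$ and $f^{(\nu)}(x)\ll t\,m_j^{-\nu}$ for $\nu=3,4,5$, so the hypotheses on $f$ hold with $A_j\asymp m_j^2/t$ (which is $\gg1$, being $\asymp1$ on the first block since $N_0\asymp\sqrt t$) and $U_j\asymp m_j\ge m_{j+1}-m_j$; while Lemma~\ref{lemquaterbis}, applicable because $e^d\le m_j<c\,e^{2d}$ and $[m_j,m_{j+1}]\subseteq[m_j,(2m_j)^*]$, supplies the hypotheses on $\varphi_p,\varphi_p',\varphi_p''$ with $H=H_j=p^2\varphi_{p-2}((2m_j)^*)$.

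Summing the conclusions of Lemma~\ref{lemquinties} over $j$, the transformed sums of consecutive blocks dovetail into a single sum over $f'(N_0)<n\le f'(N_1)$. Writing $n=-m$ gives $x(n)=\frac{t}{2\pi m}$, $f''(x(n))=\frac{2\pi m^2}{t}$, hence $\varphi_p(x(n))/\sqrt{f''(x(n))}=m^{-1/2}\bigl(\log\frac{t}{2\pi m}-d\bigr)^p$ and $e^{2\pi i(f(x(n))-nx(n))}=e^{it}e^{-it\log\frac{t}{2\pi}}m^{it}$. By (\ref{thetaprop}) the prefactor $e^{i\pi/4}e^{it}e^{-it\log\frac{t}{2\pi}}$ equals $e^{-2i\theta(t)}\bigl(1+O(t^{-1})\bigr)$, by (\ref{thetapropder}) one has $\log\frac{t}{2\pi m}-d=d-\log m+O(t^{-2})$, whence $\bigl(\log\frac{t}{2\pi m}-d\bigr)^p=(\theta'(t)-\log m)^p+O(p\,t^{-2}d^{p-1})$, and the range $\frac{t}{2\pi N_1}\le m<\frac{t}{2\pi N_0}$ agrees with $1\le m\le N_0$ up to $O(1)$ terms near $m\asymp e^d$, each of size $O(e^{-d/2}(\log N_0-d)^p)=O(t^{-p/2})$. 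With the elementary bound $\sum_{m\le e^d}m^{-1/2}|d-\log m|^p\ll d^p$ (valid because $p\ge d$, the sum being dominated by its first terms, essentially $\zeta(1/2+p/d)\,d^p$), the transformed part equals
\[
e^{-2i\theta(t)}\sum_{1\le m\le N_0}\frac{(\theta'(t)-\log m)^p}{m^{1/2-it}}+O\bigl(t^{-1}d^p\bigr),
\]
i.e. the main term of (\ref{secondsum}) plus an admissible error.

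Next, the boundary contributions $R(m_{j+1})-R(m_j)$ telescope to $R(N_1)-R(N_0)$. Here $R(N_0)$ is negligible because $\varphi_p(N_0)\ll e^{-d/2}(\log N_0-d)^p$ with $0\le\log N_0-d<e^{-d}$; and although $f''(N_1)\asymp t^{-1}$, the key observation is that $f'(N_1)=-c^{-1}+O(t^{-1})$, so that $\{f'(N_1)\}=1-c^{-1}+O(t^{-1})$ stays bounded away from $0$ and $1$, whence $|\Theta(f''(N_1),\{f'(N_1)\})|\ll_c1$ (since $\Theta(\lambda,\mu)$ is bounded for $\mu$ away from $0$ and $1$) and $R(N_1)\ll\varphi_p(N_1)\ll c^{-1/2}e^{-d}(d+\log c)^p\ll t^{-1/2}c^{p/d}d^p$, which is $\ll t^{-1/2}p^2c^{p/d}d^{p-2}$ because $d\le p$.

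Finally, and this is the crux, $\sum_jO(H_j)=O\bigl(p^2\sum_j\varphi_{p-2}((2m_j)^*)\bigr)$. Since $\varphi_{p-2}$ is increasing on $[e^d,c\,e^{2d}]$ (because $p\ge d$) and $m_j\asymp2^je^d$, one gets $\sum_j\varphi_{p-2}((2m_j)^*)\ll e^{-d/2}(\log2)^{p-2}\sum_{1\le i\le d/\log2}2^{-i/2}i^{p-2}+\varphi_{p-2}(c\,e^{2d})$; here the decisive point is that, because $p\ge d$, the summand $2^{-i/2}i^{p-2}$ is still increasing at $i\asymp d/\log2$ and grows there by a factor bounded below by a constant $>1$, so this sum is comparable to its last term $\asymp e^{-d/2}(d/\log2)^{p-2}$, not to (number of scales) times it. Hence $\sum_j\varphi_{p-2}((2m_j)^*)\ll e^{-d}d^{p-2}+c^{-1/2}e^{-d}(d+\log c)^{p-2}\ll e^{-d}d^{p-2}c^{p/d}$, and since $e^{-d}=e^{-\theta'(t)}\asymp t^{-1/2}$ we obtain $\sum_jO(H_j)\ll t^{-1/2}p^2c^{p/d}d^{p-2}$, which absorbs all the auxiliary errors above and gives (\ref{secondsum}). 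I expect this last step to be the main obstacle: one must check that summing $p^2\varphi_{p-2}$ over the $O(\theta'(t))$ dyadic scales is governed by the top scale rather than by the number of scales, since otherwise a spurious factor $\theta'(t)$ would enter the error term.
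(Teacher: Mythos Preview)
Your proposal is correct and follows essentially the same approach as the paper: dyadic splitting of $(N_0,N_1]$, application of Lemma~\ref{lemquinties} on each block with the $H_j$ supplied by Lemma~\ref{lemquaterbis}, telescoping of the boundary terms $R(\cdot)$, and the same identification of the transformed sum with the dual sum over $1\le m\le N_0$ via (\ref{thetaprop})--(\ref{thetapropder}). The only notable difference is in the bound for $\sum_j H_j$: the paper bounds $\sum_r\varphi_{p-2}(a_{r+1})$ by an integral and recognizes it as a lower incomplete gamma value $\gamma(p-1,\tfrac{d+\log c}{2})$, then uses the series expansion $\gamma(m+1,x)=e^{-x}\tfrac{x^{m+1}}{m+1}\sum_{k\ge0}\tfrac{x^k}{(m+2)_k}$ to extract the bound; your geometric-series observation that the ratio $2^{-1/2}(1+1/i)^{p-2}$ stays above a constant $>1$ throughout the range (since $p\ge d$) is a more elementary substitute that yields the same conclusion.
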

 
The next lemma prepares the application of the Euler-MacLaurin formula to the third sum of (\ref{Sum}). In \cite{PBC1}, the bound we got for the third sum, which depends on an upper bound for $\vert \phi_p \vert$ on $[N_1,N]$, is not optimal for $p\geqslant d$.
 
 \begin{lem}\label{MacLaurin}
 For $d\in\mathbb{R}$, $p\in \mathbb{N}$ and $s=\frac{1}{2}+i t$, let $g_p$ and $\phi_p$  be the function defined by $g_p(x)=(\log x -d)^p$ and $\phi_p(x)=x^{-s}g_p(x)$ for $x\geqslant 1$. Then, for $0< d\leqslant p$, $x\in[c\hspace{0.2 mm}e^{2d},\infty[$ and $k\in\mathbb{N}$, we have
 \begin{equation}\label{derg}
 \left| g_p^{(k)}(x)\right|\leqslant k!\left(\frac{p}{d}\right)^k x^{-k}g_p(x).
 \end{equation}
Further, for $t$ large enough, let $d=\theta'(t)$, $K \asymp t^{\frac{1}{2}}$, $N_1=\lfloor ce^{2d}\rfloor $, $N_2=N_1+1$ and let $N>N_2$. Then, for $d\leqslant p\ll t^{\frac{1}{2}}$ and   $0\leqslant k\leqslant 2K$, we have
\[
 \phi_p^{(k)}(x)=(-1)^k(s)_k\,x^{-s-k}g_p(x)\left(1+O\left(d^{-1}\right)\right)
\]
for $x\in[c\hspace{0.2mm}e^{2d},\infty[$
and moreover
\begin{equation}\label{derphi}
\phi_p^{(k)}(N_2)\ll \left(\frac{2\pi}{c}\right)^k t^{-\frac{1}{2}}c^{\frac{p}{d}}d^p,
\end{equation}
\begin{equation}\label{derphiN}
\phi_p^{(k)}(N)\ll \left(\frac{2\pi}{c}\right)^k N^{-\frac{1}{2}}\log^p N
\end{equation}
and
\begin{equation}\label{intphi}
  \int_{N_2}^N\vert \phi_p^{(2K)}(u)\vert\,du \ll  \left(\frac{2\pi}{c}\right)^{2K}c^{\frac{p}{d}}d^p+\left(\frac{t}{N}\right)^{2K-\frac{1}{2}}\log^p N.
\end{equation}
 \end{lem}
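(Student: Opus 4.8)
The plan is to establish the displayed assertions in the stated order, the derivative bound (\ref{derg}) being the engine that drives everything else.

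For (\ref{derg}) I would argue by induction on $k$ that $g_p^{(k)}(x)=x^{-k}\sum_{j=0}^{p}a_{k,j}(\log x-d)^{p-j}$, where the coefficients $a_{k,j}$ (depending on $p$) satisfy $a_{0,0}=1$, $a_{0,j}=0$ for $j>0$, and $a_{k+1,j}=-k\,a_{k,j}+(p-j+1)a_{k,j-1}$; this recursion follows from one differentiation together with $(\log x)'=x^{-1}$. The structural point is that the exponent $p-j$ never becomes negative — once a term $x^{-m}(\log x-d)^{0}$ is produced, differentiation only reproduces it — so $a_{k,j}=0$ for $j>p$ and hence $0<p-j+1\le p$ throughout. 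Since $x\ge c\,e^{2d}$ with $c>1$ forces $\log x-d\ge d>0$, we get $(\log x-d)^{p-j}\le d^{-j}(\log x-d)^{p}$ for $0\le j\le p$, so $|g_p^{(k)}(x)|\le x^{-k}(\log x-d)^{p}\,b_k$ with $b_k:=\sum_{j}|a_{k,j}|d^{-j}$. The recursion then gives $b_1=p/d$ and $b_{k+1}\le(k+p/d)b_k$, so $b_k\le(p/d)\prod_{m=1}^{k-1}(m+p/d)$; the hypothesis $d\le p$ yields $m+p/d\le(m+1)(p/d)$, whence $b_k\le k!(p/d)^{k}$, which is (\ref{derg}). (A naive Cauchy-estimate argument loses an extra factor growing with $p/d$, so the recursion is what buys the clean constant.)

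For the asymptotic formula I would expand by the Leibniz rule using $(x^{-s})^{(m)}=(-1)^m(s)_m x^{-s-m}$, getting $\phi_p^{(k)}(x)=\sum_{j=0}^{k}\binom{k}{j}(-1)^{k-j}(s)_{k-j}\,x^{-s-(k-j)}g_p^{(j)}(x)$, the $j=0$ term being the claimed main term. By (\ref{derg}) the ratio of the modulus of the $j$-th term to that of the main term is $\binom{k}{j}j!(p/d)^{j}|(s)_{k-j}|/|(s)_k|$, which is independent of $x$ (so the relative error is uniform on $[c\,e^{2d},\infty[$), and which is at most $(kp/(dt))^{j}$ because $\binom{k}{j}j!\le k^{j}$ and $|(s)_k/(s)_{k-j}|=\prod_{m=k-j}^{k-1}|s+m|\ge t^{j}$. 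The hypotheses $k\le 2K\ll t^{1/2}$, $p\ll t^{1/2}$ and $d=\theta'(t)\asymp\log t$ (from (\ref{thetapropder})) make $kp/(dt)\ll 1/\log t\ll d^{-1}$, so the sum over $j\ge1$ is $O(d^{-1})$ times the main term, giving $\phi_p^{(k)}(x)=(-1)^k(s)_k x^{-s-k}g_p(x)(1+O(d^{-1}))$.

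The three size estimates follow by inserting into this formula the consequences of (\ref{thetapropder}): $e^{-d}\ll t^{-1/2}$, $e^{2d}=(t/2\pi)(1+O(t^{-2}))$ (so $N_2\asymp t$ and $N_2^{-1/2-k}\ll c^{-1/2-k}t^{-1/2}(2\pi/t)^{k}$), $\log N_2-d=d+\log c+O(e^{-2d})$ (so, as $p\,e^{-2d}/d\to0$, $(\log N_2-d)^{p}\le(d+\log c)^{p}(1+o(1))\le c^{p/d}d^{p}(1+o(1))$), and $|(s)_k|=\prod_{m=0}^{k-1}\sqrt{(m+\frac12)^2+t^2}=t^{k}(1+o(1))$ since $(2K)^2\ll t$. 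For (\ref{derphi}) this gives $|\phi_p^{(k)}(N_2)|\ll|(s)_k|N_2^{-1/2-k}(\log N_2-d)^p\ll(2\pi/c)^{k}t^{-1/2}c^{p/d}d^{p}$; for (\ref{derphiN}) one uses $N>N_2>c\,e^{2d}$, so $2\pi N/c>(1-o(1))t$ and hence $|(s)_k|\ll(2\pi N/c)^{k}$, together with $(\log N-d)^p\le\log^p N$. For (\ref{intphi}), the asymptotic formula bounds $|\phi_p^{(2K)}(u)|$ on $[N_2,N]$ by $2|(s)_{2K}|u^{-1/2-2K}(\log u-d)^{p}$, a non-oscillating majorant, so it remains to estimate $\int_{N_2}^{N}u^{-1/2-2K}(\log u-d)^{p}\,du$; integrating by parts $p$ times to strip the powers of $\log u-d$ leaves boundary contributions at $N_2$ and at $N$, each of the form $u^{1/2-2K}\sum_{j=0}^{p}\frac{p!/(p-j)!}{(2K-\frac12)^{j+1}}(\log u-d)^{p-j}$ with geometric inner sums of ratio $\ll p/((\log u-d)(2K-\frac12))\ll d^{-1}$, so the integral is $\ll t^{-1/2}\bigl(N_2^{1/2-2K}(\log N_2-d)^{p}+N^{1/2-2K}\log^p N\bigr)$; multiplying by $|(s)_{2K}|\ll t^{2K}$ and using $N_2^{1/2-2K}\ll t^{1/2}c^{-2K}(2\pi/t)^{2K}$ and $(\log N_2-d)^p\ll c^{p/d}d^p$ turns the first summand into $(2\pi/c)^{2K}c^{p/d}d^{p}$ and the second into $t^{2K-1/2}N^{1/2-2K}\log^p N=(t/N)^{2K-1/2}\log^p N$, which is (\ref{intphi}). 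The main obstacle is the first step — securing the precise factor $k!(p/d)^{k}$ uniformly in $k$ and $p$, which forces one through the recursion for $a_{k,j}$, the observation that $j$ never exceeds $p$, and the use of $d\le p$ at the single step $m+p/d\le(m+1)(p/d)$; everything downstream is bookkeeping with (\ref{thetapropder}).
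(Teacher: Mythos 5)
Your proof is correct and follows essentially the same route as the paper: an inductive recursion for the coefficients of $g_p^{(k)}$ leading to the bound $k!(p/d)^k$ (the paper identifies the coefficients via Stirling numbers of the first kind and uses $(y)_k\le k!\,y^k$ for $y=p/d\ge1$, which is exactly your step $m+p/d\le(m+1)(p/d)$), then the Leibniz expansion with the geometric-series bound $\sum_{l\ge1}(kp/(td))^l\ll d^{-1}$ for the relative error, and the same explicit antiderivative of $u^{-1/2-2K}(\log u-d)^p$ (obtained in the paper in closed form rather than by iterated integration by parts) for \eqref{intphi}. The only differences are presentational, so no further comment is needed.
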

\begin{lem}\label{thirdsum}
 Let $t$ be large enough and assume that $\theta'(t)\leqslant p\ll t^{\frac{1}{2}}$. Then
\begin{eqnarray}\label{thirdsumbis}
\sum_{N_1 < n \leq N}\phi_p(n)&=&-\frac{N^{1-s}}{s-1}(\log N -d)^p\, p!\,\sum_{l=0}^{p}\frac{((s-1)(\log N -d ))^{-l}}{(p-l)!}\\
&+&O\left(t^{-\frac{1}{2}}c^{\frac{p}{d}}d^p+N^{-\frac{1}{2}}\log^p N\right).\nonumber
\end{eqnarray}
\end{lem}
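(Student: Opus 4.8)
The goal is to evaluate the tail sum $\sum_{N_1 < n \le N}\phi_p(n)$ by the Euler--MacLaurin formula applied to the smooth function $\phi_p(x)=x^{-s}(\log x - d)^p$ on the interval $[N_2,N]$, and to show that the main term produced is exactly the explicit boundary expression in \eqref{B}, up to the claimed error. The engine is Lemma~\ref{MacLaurin}, which controls all the derivatives $\phi_p^{(k)}$ uniformly for $0\le k\le 2K$ with $K\asymp t^{1/2}$; this is what makes a high-order Euler--MacLaurin expansion viable.

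First I would write the Euler--MacLaurin formula with remainder at order $2K$:
\[
\sum_{N_2 \le n \le N}\phi_p(n) = \int_{N_2}^N \phi_p(u)\,du + \tfrac{1}{2}\bigl(\phi_p(N_2)+\phi_p(N)\bigr) + \sum_{j=1}^{K}\frac{B_{2j}}{(2j)!}\bigl(\phi_p^{(2j-1)}(N)-\phi_p^{(2j-1)}(N_2)\bigr) + E
\]
where $E$ is bounded in terms of $\int_{N_2}^N |\phi_p^{(2K)}(u)|\,du$ via the standard estimate $|E|\ll \int_{N_2}^N|\phi_p^{(2K)}(u)|\,du$ (using $|B_{2K}/(2K)!|$ and the boundedness of the Bernoulli polynomial; one may need the slightly sharper periodic-Bernoulli bound, but that is routine). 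By \eqref{intphi} this remainder is $\ll (2\pi/c)^{2K}c^{p/d}d^p + (t/N)^{2K-1/2}\log^p N$; since $c>1$ is fixed and $2\pi/c$ need not be $<1$, one must check the balance — but $K\asymp t^{1/2}$ and the competing main terms are of size $t^{-1/2}c^{p/d}d^p$, so choosing the implied constant in $K\asymp t^{1/2}$ appropriately (or noting $N$ is taken large, e.g. $N\ge t$ eventually, as in \cite{PBC1}) forces $E$ into the error term $O(t^{-1/2}c^{p/d}d^p + N^{-1/2}\log^p N)$. The half-integer endpoint terms $\tfrac12\phi_p(N_2)$ and $\tfrac12\phi_p(N)$ are absorbed by \eqref{derphi} and \eqref{derphiN} respectively (case $k=0$), giving $\ll t^{-1/2}c^{p/d}d^p$ and $\ll N^{-1/2}\log^p N$.

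The substantive computation is the integral $\int_{N_2}^N\phi_p(u)\,du$ together with the boundary derivative sum. The clean way is to observe that repeated integration by parts of $\int x^{-s}(\log x - d)^p\,dx$ produces exactly the finite sum
\[
\int \phi_p(u)\,du = -\frac{x^{1-s}}{s-1}(\log x - d)^p\,p!\sum_{l=0}^{p}\frac{\bigl((s-1)(\log x - d)\bigr)^{-l}}{(p-l)!} + C,
\]
because differentiating the right-hand side returns $\phi_p$ (the sum telescopes — this is the antiderivative identity behind the boundary term in \eqref{B}). Evaluating between $N_2$ and $N$: the contribution at $N$ is precisely the displayed main term of \eqref{thirdsumbis} (with $N_2$ vs. starting from $N_1$ matching up to the single term $\phi_p(N_2)$ already accounted for, modulo a shift $N_1$ vs $N_2$ that is $O(\phi_p(N_2))$), and the contribution at the lower endpoint $N_2$ is $\ll N_2^{-1/2}\log^p N_2 \ll t^{-1/2}c^{p/d}d^p$ since $N_2\asymp ce^{2d}$ and $\log N_2 = 2d + O(d^{-1})$ so $\log^p N_2 \ll e^{o(p)}d^p \ll c^{p/d}d^p$ (using $p\gg d$). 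Likewise the boundary derivative sum $\sum_j \frac{B_{2j}}{(2j)!}\phi_p^{(2j-1)}(N_2)$ is, by \eqref{derphi}, $\ll t^{-1/2}c^{p/d}d^p\sum_j|B_{2j}/(2j)!|(2\pi/c)^{2j-1}$; the series $\sum_j |B_{2j}/(2j)!|x^{2j-1}$ converges for $|x|<2\pi$, and since $c>1$ we have $2\pi/c<2\pi$, so this is $O(t^{-1/2}c^{p/d}d^p)$. The same bound with $N$ in place of $N_2$ uses \eqref{derphiN} and gives $O(N^{-1/2}\log^p N)$.

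The main obstacle is making the convergence of the Bernoulli-number series genuinely uniform in $p$ and $t$: \eqref{derphi} and \eqref{derphiN} deliver the geometric factor $(2\pi/c)^k$ exactly so that $\sum_j |B_{2j}/(2j)!|(2\pi/c)^{2j-1}<\infty$ (radius $2\pi$), and this is the one place where the hypothesis $c>1$ is used in an essential, not cosmetic, way — it guarantees $2\pi/c$ is strictly inside the disk of convergence. A secondary technical point is reconciling the summation range: \eqref{thirdsumbis} sums over $N_1 < n \le N$ whereas Euler--MacLaurin naturally runs over $N_2 \le n \le N$ with $N_2 = N_1+1$; since $\{N_1,\dots\}$ and $\{N_2,\dots\}$ differ only by whether $n=N_2$ is weighted $\tfrac12$ or $1$, the discrepancy is $\tfrac12\phi_p(N_2)\ll t^{-1/2}c^{p/d}d^p$, harmless. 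Assembling the four pieces — integral (main term $+$ $O(t^{-1/2}c^{p/d}d^p)$ at the lower end), endpoint half-values, Bernoulli boundary sum, and remainder $E$ — yields \eqref{thirdsumbis}.
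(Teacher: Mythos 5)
Your proof follows the paper's argument step for step: Euler--MacLaurin at order $2K$ with $K\asymp t^{\frac{1}{2}}$ on $[N_2,N]$, the integration-by-parts antiderivative producing the main term, Lemma~\ref{MacLaurin} (i.e.\ \eqref{derphi}, \eqref{derphiN}) for the endpoint values and the Bernoulli boundary sum with $c>1$ ensuring convergence, and \eqref{intphi} for the remainder. The one correction: the crude bound $|E|\ll\int_{N_2}^N|\phi_p^{(2K)}(u)|\,du$ is not sufficient and no choice of the implied constant in $K\asymp t^{\frac{1}{2}}$ can rescue $(2\pi/c)^{2K}c^{p/d}d^p$ when $c<2\pi$ (enlarging $K$ only makes it worse); you must use the sharp Euler--MacLaurin remainder constant $2\zeta(2K)/(2\pi)^{2K}$ --- the ``sharper periodic-Bernoulli bound'' you hedge at --- which converts $(2\pi/c)^{2K}$ into $c^{-2K}\ll t^{-\frac{1}{2}}$, exactly as in the paper.
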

Finally, the next lemmas are needed to make use of relation (\ref{Zk}).
\begin{lem}\label{etaapp}
Let $t$ be large enough, $c>1$ be a fixed constant and assume that $\theta'(t)\leqslant p\ll t^{\frac{1}{2}}$.Then
\begin{eqnarray}\label{etafunc}
\eta_p\left(\theta'(t),\frac{1}{2}+it\right)&=&\sum_{1 \leq n \leq \sqrt{\frac{t}{2\pi}}}\frac{(\theta'(t)-\log n)^p }{n^{\frac{1}{2}+it}}+ e^{- 2i\theta(t)}\sum_{1 \leq n \leq \sqrt{\frac{t}{2\pi}}}\frac{(\log n-\theta'(t))^p }{n^{\frac{1}{2}-it}}\nonumber
\\&+& O\left(t^{-\frac{1}{2}}c^{\frac{p}{\theta'(t)}}\theta'(t)^p\right).
 \end{eqnarray}
\end{lem}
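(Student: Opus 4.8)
The plan is to combine relation~(\ref{B}) with Lemmas~\ref{appeta} and~\ref{thirdsum}, treating the cut‑off in~(\ref{B}) as a free parameter to be sent to infinity at the end. Recall that $\tilde\eta_p(d,s)=(-1)^p\eta_p(d,s)$ coincides, by analytic continuation, with $\sum_n\phi_p(n)$, and that the splitting~(\ref{Sum}) together with Lemmas~\ref{appeta} and~\ref{thirdsum} stays valid with any fixed constant $>1$ in place of $c$; in order to land precisely on the exponent appearing in~(\ref{etafunc}) I would run the whole argument with $\sqrt c$ instead of $c$, that is with $N_1=\lfloor\sqrt c\,e^{2\theta'(t)}\rfloor$ (legitimate since $\sqrt c>1$; note $N_0=\lceil e^{\theta'(t)}\rceil$ does not involve $c$). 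Starting from~(\ref{B}) with $d=\theta'(t)$, $s=\frac12+it$ and an arbitrary real $N>N_1$, split $\sum_{1\le n\le N}\phi_p(n)$ according to~(\ref{Sum}) and replace the tail $\sum_{N_1<n\le N}\phi_p(n)$ by the right‑hand side of~(\ref{thirdsumbis}).

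The crucial point is that the main term of Lemma~\ref{thirdsum}, that is the right‑hand side of~(\ref{thirdsumbis}) apart from its error term, is exactly the negative of the explicit correction term in~(\ref{B}), so the two cancel identically. What is left, for every $N>N_1$, is
\[
\tilde\eta_p\!\left(\theta'(t),\frac12+it\right)-\sum_{1\le n\le N_0}\phi_p(n)-\sum_{N_0<n\le N_1}\phi_p(n)=O\!\left(t^{-\frac12}(\sqrt c)^{\frac{p}{\theta'(t)}}\theta'(t)^{p}\right)+O\!\left((1+t)N^{-\frac12}\log^{p}N\right).
\]
Since the left‑hand side is independent of $N$, letting $N\to\infty$ removes the $N$‑dependent error. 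This limiting step is the only place where the free parameter genuinely matters: before the cancellation the correction term of~(\ref{B}) grows like $N^{1/2}$ and cannot simply be dropped, and only afterwards does the remaining $N$‑dependence sit entirely inside an error that tends to $0$. Inserting now Lemma~\ref{appeta} for the middle block $\sum_{N_0<n\le N_1}\phi_p(n)$ produces its main term $e^{-2i\theta(t)}\sum_{1\le n\le N_0}(\theta'(t)-\log n)^p n^{-\frac12+it}$ together with an additional error $O\!\left(t^{-\frac12}p^{2}(\sqrt c)^{\frac{p}{\theta'(t)}}\theta'(t)^{p-2}\right)$.

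It remains to tidy up. Multiplying the resulting identity by $(-1)^p$ and using $\phi_p(n)=(-1)^p(\theta'(t)-\log n)^p n^{-\frac12-it}$ turns the two surviving sums into $\sum_{1\le n\le N_0}(\theta'(t)-\log n)^p n^{-\frac12-it}$ and $e^{-2i\theta(t)}\sum_{1\le n\le N_0}(\log n-\theta'(t))^p n^{-\frac12+it}$, which are the sums of~(\ref{etafunc}) truncated at $N_0$. By~(\ref{thetapropder}) one has $e^{\theta'(t)}=\sqrt{t/2\pi}\,(1+O(t^{-2}))$, so up to rounding $N_0$ and $\sqrt{t/2\pi}$ differ by $O(1)$; hence replacing $N_0$ by $\sqrt{t/2\pi}$ in each sum changes it by at most boundedly many terms $n^{-\frac12}|\log n-\theta'(t)|^p$ with $n\asymp t^{1/2}$ and $|\log n-\theta'(t)|=O(t^{-1/2})$, and as $p\ge1$ each such term is $\ll t^{-\frac14}\cdot t^{-\frac12}=t^{-\frac34}$ and is absorbed into the error. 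Finally $(\sqrt c)^{p/\theta'(t)}\theta'(t)^p\le c^{p/\theta'(t)}\theta'(t)^p$, and because $p\ge\theta'(t)$ the elementary bound $x^{2}\ll(\sqrt c)^{x}$ for $x\ge1$ (exponential beats polynomial, implied constant depending on $c$), applied with $x=p/\theta'(t)$, gives $p^{2}\theta'(t)^{p-2}\ll(\sqrt c)^{p/\theta'(t)}\theta'(t)^{p}$, whence the extra error of Lemma~\ref{appeta} is also $\ll t^{-\frac12}c^{p/\theta'(t)}\theta'(t)^{p}$; collecting everything gives~(\ref{etafunc}). The only genuinely non‑mechanical ingredient here is the exact cancellation of the Euler‑MacLaurin main term against the correction term of~(\ref{B}); once Lemmas~\ref{appeta} and~\ref{thirdsum} are in hand the remainder is routine error bookkeeping, the one subtlety being the factor $p^{2}$ in Lemma~\ref{appeta}, which is what forces the passage to $\sqrt c$ and the hypothesis $p\ge\theta'(t)$.
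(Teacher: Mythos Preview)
Your argument is correct and follows essentially the same route as the paper's own proof: you substitute $\sqrt c$ for $c$ in Lemmas~\ref{appeta} and~\ref{thirdsum}, observe the exact cancellation between the Euler--MacLaurin main term of~(\ref{thirdsumbis}) and the correction term of~(\ref{B}), let $N\to\infty$, swap $N_0$ for $\sqrt{t/2\pi}$, and absorb the factor $p^2$ via an elementary exponential-beats-polynomial inequality. The only difference is cosmetic: the paper compresses these steps into a few lines and uses the single inequality $x^2 c^{x/2}\le 16e^{-2}(\log c)^{-2}c^x$ valid for all $x\ge 0$ (so the hypothesis $p\ge\theta'(t)$ is not actually needed at this step, contrary to your final remark), whereas you split it into $(\sqrt c)^{p/\theta'(t)}\le c^{p/\theta'(t)}$ and $x^2\ll(\sqrt c)^x$ for $x\ge 1$.
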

  \begin{lem}\label{lemquinquiesbis}
 Let $\theta$ be the function defined by (\ref{theta}). Then, for $\nu\geq 2$ and $t>0$ we have
  \[
 \left |\theta^{(\nu)}(t)\right|\leq \frac{(\nu-2)!}{2t^{\nu-1}}+\frac{2\nu !}{\sqrt{\nu}\,t^{\nu}}\,.
 \]
 Further, let $t$ be large enough and assume that $ k\ll t^{\frac{1}{2}}$. Then
 \[
  \sum_{\nu=1}^k\frac{\left|\theta^{(\nu)}(t)\right|t^{\nu}}{\nu!}\leq t\theta'(t)+\frac{t}{2}
 \hspace{1cm}\mbox{and}\hspace{1 cm} \sum_{p=0}^{k-2} \vert q_{p}(t)\vert\theta'(t)^{p}\ll \frac{k}{t}e^{\frac{k}{2\theta'(t)}}\theta'(t)^{k-1}.
     \]
  where $q_p$ are the functions defined by (\ref{qp}).
 \end{lem}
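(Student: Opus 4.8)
The plan is to prove $|\theta^{(\nu)}(t)|\leq\frac{(\nu-2)!}{2t^{\nu-1}}+\frac{2\nu!}{\sqrt\nu t^{\nu}}$ first and then read off the two sums. Writing $\theta(t)=\Im\log\Gamma\bigl(\tfrac14+\tfrac{it}2\bigr)-\tfrac t2\log\pi$, one has $\theta'(t)=-\tfrac12\log\pi+\tfrac12\Re\psi\bigl(\tfrac14+\tfrac{it}2\bigr)$ with $\psi=\Gamma'/\Gamma$, and for $\nu\geq2$
\[
\theta^{(\nu)}(t)=\tfrac12\,\Re\Bigl(\bigl(\tfrac i2\bigr)^{\nu-1}\psi^{(\nu-1)}(w)\Bigr),\qquad w:=\tfrac14+\tfrac{it}2 .
\]
I would then use $\psi^{(\nu-1)}(w)=(-1)^{\nu}(\nu-1)!\sum_{n\geq0}(w+n)^{-\nu}$ together with first-order Euler--Maclaurin summation, $\sum_{n\geq0}(w+n)^{-\nu}=\frac{w^{1-\nu}}{\nu-1}+\frac{w^{-\nu}}{2}+R$ with $R=-\nu\int_0^\infty(\{x\}-\tfrac12)(w+x)^{-\nu-1}dx$. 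Since $|w|\geq t/2$, the first two terms contribute to $\theta^{(\nu)}(t)$ at most $\frac{(\nu-2)!}{2t^{\nu-1}}$ and $\frac{(\nu-1)!}{2t^{\nu}}$ respectively, while
\[
|R|\leq\frac\nu2\int_0^\infty\!\Bigl(x^2+\tfrac{t^2}4\Bigr)^{-(\nu+1)/2}\!dx=\frac\nu2\Bigl(\frac t2\Bigr)^{-\nu}\frac{\sqrt\pi}{2}\,\frac{\Gamma(\nu/2)}{\Gamma((\nu+1)/2)}\leq\frac{\sqrt{\pi\nu}}{2}\cdot\frac{2^{\nu}}{t^{\nu}}
\]
by Gautschi's inequality $\Gamma(\nu/2)/\Gamma((\nu+1)/2)\leq 2/\sqrt\nu$ (valid for $\nu\geq2$), so that $R$ contributes at most $\frac{\sqrt\pi}{2}\cdot\frac{\nu!}{\sqrt\nu t^{\nu}}$. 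As $\frac{(\nu-1)!}{2}+\frac{\sqrt\pi}{2}\cdot\frac{\nu!}{\sqrt\nu}\leq\frac{2\nu!}{\sqrt\nu}$ for every $\nu\geq2$, the desired bound follows for all $t>0$.

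\textbf{The first sum.} The $\nu=1$ term is $t\theta'(t)$ (positive for $t$ large), and for $2\leq\nu\leq k$ the pointwise bound gives $\frac{|\theta^{(\nu)}(t)|t^{\nu}}{\nu!}\leq\frac{t}{2\nu(\nu-1)}+\frac2{\sqrt\nu}$; summing over $\nu\geq2$ and using $\sum_{\nu\geq2}\frac1{\nu(\nu-1)}=1$ and $\sum_{2\leq\nu\leq k}\nu^{-1/2}\leq2\sqrt k$ yields a total at most $\frac t2-\frac t{2k}+4\sqrt k$, which is $\leq\frac t2$ because $8k^{3/2}\leq t$ for $t$ large ($k\ll t^{1/2}$).

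\textbf{The second sum.} Bounding $|q_p(t)|$ termwise by the absolute values in (\ref{qp}) and summing over $0\leq p\leq k-2$ gives $\sum_{p=0}^{k-2}|q_p(t)|\theta'(t)^{p}\leq B_k\bigl(\theta'(t),|\theta''(t)|,\dots,|\theta^{(k)}(t)|\bigr)-\theta'(t)^{k}$, where $B_k$ is the complete Bell polynomial. Since $B_k$ is nondecreasing in each argument, the pointwise bound lets me replace $|\theta^{(j)}(t)|$ by $b_j:=\frac{(j-2)!}{2t^{j-1}}+\frac{2j!}{\sqrt j\,t^{j}}$, and $B_k(\theta'(t),b_2,\dots,b_k)=k!\,[x^{k}]e^{\phi(x)}$ with
\[
\phi(x)=\theta'(t)x+\rho(x),\qquad \rho(x)=\frac t2\Bigl[\bigl(1-\tfrac xt\bigr)\log\bigl(1-\tfrac xt\bigr)+\tfrac xt\Bigr]+2\sum_{j\geq2}\frac{(x/t)^{j}}{\sqrt j},
\]
all Taylor coefficients of $\rho$ being nonnegative. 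Writing $e^{\phi}=e^{\theta'(t)x}e^{\rho}$ peels off the $m=0$ term $\theta'(t)^{k}$ (the $m=1$ term vanishing since $\rho$ has no linear part), leaving
\[
\sum_{p=0}^{k-2}|q_p(t)|\theta'(t)^{p}\;\leq\;\sum_{m=2}^{k}\binom km\theta'(t)^{\,k-m}c_m,\qquad c_m:=m!\,[x^{m}]e^{\rho(x)} .
\]
I would bound $c_m\leq m!\,r_m^{-m}e^{\rho(r_m)}$ by Cauchy's inequality with $r_m=\sqrt{2tm}$ (which is $<t$, hence inside the disc of convergence); an elementary estimate of the two parts of $\rho$ gives $\rho(r_m)\leq\frac m2\bigl(1+o(1)\bigr)$ uniformly for $2\leq m\leq k\ll t^{1/2}$, whence $c_m\ll\sqrt m\,(m/2te)^{m/2}$ by Stirling's formula. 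Inserting this, using $\binom km\leq k^{m}/m!$, and noting the resulting series in $m$ is dominated by its $m=2$ term (because $k\ll t^{1/2}$ forces the ratio of consecutive terms below $1$) gives $\sum_{p=0}^{k-2}|q_p(t)|\theta'(t)^{p}\ll\frac{k^{2}}{t}\theta'(t)^{k-2}$; finally $k/\theta'(t)\ll e^{k/2\theta'(t)}$ (since $u\mapsto ue^{-u/2}$ is bounded) converts this into $\ll\frac kt e^{k/2\theta'(t)}\theta'(t)^{k-1}$.

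\textbf{Main difficulty.} Only the second sum is delicate. The crude Cauchy bound $B_k\leq k!\,r^{-k}e^{\phi(r)}$ is useless because already $B_k\geq\theta'(t)^{k}$ exceeds the target, so the subtraction of $\theta'(t)^{k}$ must be carried out \emph{before} estimating, and the contour radius $r_m$ must be taken at (essentially) the saddle point of $r\mapsto r^{-m}e^{\rho(r)}$ and allowed to vary with $m$. The genuinely technical point is controlling $\rho(r_m)$ uniformly over the whole range $2\leq m\leq k\ll t^{1/2}$ --- that is, checking that the cubic and higher terms of $\frac t2[(1-\tfrac xt)\log(1-\tfrac xt)+\tfrac xt]$ and the correction $2\sum_{j\geq2}(x/t)^{j}/\sqrt j$ do not disturb $\rho(r_m)\leq\frac m2(1+o(1))$.
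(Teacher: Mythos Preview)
Your argument for the pointwise bound and the first sum is correct and is essentially the approach of \cite{PBC1}, to which the paper simply refers.

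For the second sum your route is correct but considerably more elaborate than the paper's. The paper (following \cite{PBC1}) does not introduce Bell polynomials or Cauchy estimates at all: after writing $a_{\nu}=|\theta^{(\nu)}(t)|t^{\nu}/\nu!$ and observing $|q_p(t)|\theta'(t)^p=\dfrac{k!}{t^k}\sum \dfrac{a_1^{p}a_2^{p_2}\cdots a_k^{p_k}}{p!\,p_2!\cdots p_k!}$, it simply \emph{drops the weight constraint} $p+2p_2+\cdots+kp_k=k$ and groups terms by the number of parts $m=p+p_2+\cdots+p_k\in\{1,\dots,k-1\}$, obtaining the one-line multinomial overcount
\[
\sum_{p=0}^{k-2}|q_p(t)|\theta'(t)^p\;\le\;\frac{k!}{t^k}\sum_{m=1}^{k-1}\frac{1}{m!}\Bigl(\sum_{\nu=1}^k a_\nu\Bigr)^{m}.
\]
Now the first-sum inequality $\sum a_\nu\le t\theta'(t)+t/2$ plugs in directly; since $t\theta'(t)\gg k$, the $m=k-1$ term dominates and one gets $\ll\dfrac{k}{t}(\theta'(t)+\tfrac12)^{k-1}\le\dfrac{k}{t}e^{k/2\theta'(t)}\theta'(t)^{k-1}$ in two more lines. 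The exponential factor $e^{k/2\theta'(t)}$ thus arises \emph{naturally} from $(1+\tfrac{1}{2\theta'(t)})^{k-1}$ rather than being inserted at the end.

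What your approach buys is a sharper intermediate estimate $\ll k^2 t^{-1}\theta'(t)^{k-2}$ before you relax it, and a method that would adapt to situations where the $a_\nu$ are not all comparable to a single scale. What it costs is the saddle-point choice $r_m=\sqrt{2tm}$ and the uniform control of $\rho(r_m)$ you flag as the main difficulty --- whereas the paper's overcounting avoids any analytic estimate beyond the first-sum bound already proved. In short, both arguments are valid; the paper's is markedly shorter because it exploits the first inequality of the lemma in a way your Bell-polynomial decomposition does not.
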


\section{Proofs}
\begin{proof}[ Proof of Lemma~{\rm\ref{lemquaterbis}}]
 
 By computing $\varphi_p'$ and $\varphi_p''$ we see that
 \[
\varphi_p'(x) \ll \max(\varphi_p(x),p\hspace{0.5mm}\varphi_{p-1}(x))a^{-1}
 \]
 and
 \[
\varphi_p''(x) \ll \max(p\hspace{0.5mm}\varphi_{p-1}(x),\varphi_p(x)+p^2\varphi_{p-2}(x))a^{-2}
 \]
for $x\geqslant a$ and we complete the proof by noting that for $p\geqslant d$ and $x\in[e^d,c\hspace{0.2mm}e^{2d}]$ we have
 \[
 \varphi_p(x)\ll p\varphi_{p-1}(x)\ll p^2 \varphi_{p-2}(x)
 \]
and that
 \[
 \varphi_{p-2}(x) \leqslant  a^{-\frac{1}{2}}(\log (2a)^* -d)^{p-2}\leqslant 2^{\frac{1}{2}}\varphi_{p-2}((2a)^*)
 \]
for $x\in[a,(2a)^*]$.
 \end{proof}
  \begin{proof}[ Proof of Lemma~{\rm\ref{appeta}}]
      We have
     \begin{equation}\label{231}
     \sum_{N_0 < n\leq N_1 }\phi_p(n)=\sum_{0\leq r \leq l}\;\sum_{a_r < n \leq a_{r+1}}\;\phi_p(n)
     \end{equation}
     where $l$ is an integer such that $2^l N_0 < N_1 \leq 2^{l+1}N_0$, $a_r=2^r N_0$ for $r=0,\ldots,l$ and $a_{l+1}=N_1$. We introduce the functions $\varphi_p(x)=x^{-\frac{1}{2}}(\log x -d)^p$ and $f(x)= -\frac{t}{2\pi}\log x $ so that $\phi_p(n)=\varphi_p(n)e^{2\pi i f(n)}$. Since $e^{d}\leqslant N_0<N_1\leqslant c\hspace{0.2mm}e^{2d}$ where $d=\theta'(t)$ and thanks to Lemma \ref{lemquaterbis}, the assumptions of Lemma \ref{lemquinties} are satisfied with
    $a=a_r$, $b=a_{r+1}$, $H=H_r:=p^2\varphi_{p-2}(a_{r+1})$, $U=a_r$, $A=\frac{a_r^2}{t}$ and relation (\ref{transform}) reads
     \begin{eqnarray*}
     \sum_{a_r < n \leq a_{r+1}}\;\phi_p(n)&=&e^{i\frac{\pi}{4}}\sum_{f'(a_r)<n\leq f'(a_{r+1})}\frac{\varphi_p(x(n))}{\sqrt{f''(x(n))}}e^{2\pi i (f(x(n))-nx(n))}\nonumber\\
    &+&R(a_{r+1})-R(a_r)+O(H_r)
    \end{eqnarray*}
    where $x(n)=-\frac{t}{2\pi n}$. By definition $\Theta(\lambda,\mu)=O(\lambda^{-\frac{1}{2}})$ for $\mu\in[0,1]$ and  $\Theta(\lambda,\mu)=O_{\delta}(1)$ for $\mu\in[\delta,1-\delta]$ which imply that $R(a_0)$ is a $O(H_0)$ and $R(a_{l+1})$ is a~$O(H_l)$. We sum the previous relations, noting that $\lfloor f'(a_{l+1})\rfloor =-1$ and setting $q=-f'(a_0)$, to get
     \begin{eqnarray}\label{232}
    \sum_{0\leq r \leq l}\;\sum_{a_r < n \leq a_{r+1}}\phi_p(n)&=&\hspace{-4mm}\sum_{-q<n\leq-1}\frac{\varphi_p(x(n))}{\sqrt{f''(x(n))}}e^{ 2\pi i (f(x(n))-nx(n)+ \frac{1}{8})}+O(\sum_{0\leq r \leq l}H_r)\nonumber\\
     &=&\hspace{-4mm}\sum_{1 \leq n <q}\hspace{-1mm}\frac{\varphi_p(x(-n))}{\sqrt{f''(x(-n))}}e^{ 2\pi i (f(x(-n))+nx(-n)+\frac{1}{8})}\!+\!O(\!\sum_{0\leq r \leq l}H_r).
    \end{eqnarray}
    Further, using (\ref{thetapropder}) and $d=\theta'(t)$, we have for $1\leq n\leq q$
    \begin{equation}\label{233}
    \frac{\varphi_p(x(-n))}{\sqrt{f''(x(-n))}}=\frac{(\log \frac{t}{2\pi n}-d)^p}{n^{\frac{1}{2}}}=\frac{(\theta'(t)-\log n)^p}{n^{\frac{1}{2}}}+O(pt^{-2}\theta'(t)^{p-1})
     \end{equation}
    and making use of (\ref{thetaprop}) we get
    \begin{equation}\label{234}
    e^{  2\pi i (f(x(-n))+nx(-n)+\frac{1}{8})}=e^{-2 i ( \frac{t}{2}\log\frac{t}{2\pi}-\frac{t}{2}-\frac{\pi}{8})+it\log n}=\frac{e^{ -2i\theta(t)}}{n^{-it}}+O(t^{-1}).
     \end{equation}
     Further, for $m\geqslant d-2$, the function $\varphi_m$ is increasing on $[N_0,N_1]$ and we have
      \begin{equation}\label{var}
          \sum_{r=0}^{l}\varphi_m(a_{r+1})\leqslant \sum_{r=1}^{l-1}\varphi_m(2^rN_0)+2\varphi_m(N_1)
             \leqslant \int_1^{l}\varphi_m(2^uN_0)\,du +2\varphi_m(N_1)
         \end{equation}
     and since 
     \[
    d\leqslant \log N_0\leqslant \log (e^d+1)=d+\log(1+e^{-d})\leqslant d+e^{-d}
     \]
  and $e^{-d}=O(t^{-\frac{1}{2}})$, we get for $u\geqslant 1$ and $m\ll t^{\frac{1}{2}}$
     \[
     \left(u\log 2+\log N_0-d\right)^m\!\leqslant\! (u\log 2)^m\left(1+\frac{e^{-d}}{u\log 2}\right)^m\!\!\!\leqslant (u\log 2)^m e^{\frac{me^{-d}}{u\log 2}}\ll (u\log 2)^m
     \]
     and therefore
     \begin{equation}\label{varphi}
     \int_1^{l}\varphi_m(2^uN_0)\,du \ll e^{-\frac{d}{2}}\int_0^l 2^{-\frac{u}{2}}\left(u\log 2\right)^m\,du\ll e^{-\frac{d}{2}}2^m\int_0^\frac{l\log 2}{2}e^{-y}y^m dy.
     \end{equation}
    Moreover $2^lN_0\leqslant N_1$ and thus $2^l\leqslant\frac{N_1}{N_0}\leqslant c\hspace{0.2mm}e^d$ and $l\log 2\leqslant d+\log c$ and this implies that
    \begin{equation}\label{intvar}
    \int_0^\frac{l\log 2}{2}e^{-y}y^m dy\leqslant \int_0^{\frac{d}{2}+\frac{\log c}{2}}e^{-y}y^m dy=\gamma(m+1,\frac{d}{2}+\frac{\log c}{2})
    \end{equation}
    where $\gamma(n,x)=\int_0^xe^{-t}t^{n-1}\,dt$ is the lower incomplete gamma function. Setting $x=\frac{d}{2}+\frac{\log c}{2}$ and using integration by parts one checks that
    \begin{equation}\label{gamma}
    \gamma(m+1,x)=m!\,e^{-x}\sum_{n=m+1}^{\infty}\frac{x^n}{n!}=e^{-x}\frac{x^{m+1}}{m+1}\sum_{k=0}^{\infty}\frac{x^k}{(m+2)_k}\ll e^{-\frac{d}{2}}\frac{2^{-m}c^{\frac{m}{d}}d^{m+1}}{m}.
    \end{equation}
 Using relations (\ref{var}), (\ref{varphi}), (\ref{intvar}), (\ref{gamma}) with $m=p-2$ we get
    \begin{equation}\label{235}
    \sum_{0\leq r \leq l}H_r\ll e^{-d}p\,c^{\frac{p}{d}}\,d^{p-1}+p^2\varphi_{p-2}(N_1)\ll e^{-d}p^2c^{\frac{p}{d}}d^{p-2}
    \end{equation}
   since $\varphi_{p-2}(N_1)\ll e^{-d}c^{\frac{p}{d}}d^{p-2}$. To complete the proof we make use of relations (\ref{231}), (\ref{232}), (\ref{233}), (\ref{234}), (\ref{235}) and we observe that the sum over $n$ such that $1\leq n<q$ can be replaced by the sum over $n$ such that $1\leq n\leqslant N_0$ without changing the order of the error term.
  \end{proof}
    \begin{proof}[ Proof of Lemma~{\rm\ref{MacLaurin}}]
   One can check by induction that the derivatives of $g_p$ are given by
    \[
    g_p ^{(k)}(x)=x^{-k}\sum_{l=0}^k c_{k\,,\,l}(p-l+1)_l(\log x-d)^{p-l} 
    \]
   where the $c_{k\,,\,l}$ are integers defined recursively by
    \[
    \left\lbrace
    \begin{array}{l}
    c_{\,0\,,\,0}=1, \;\;c_{k\,,\,0}\,=\,c_{\,0\,,\,l}=0\;\hspace{2.2 mm} \mbox{for}\;k,l\geq 1\, \\
    c_{k+1\,,\,l}\,=\,c_{k\,,\,l-1}\,-\,k c_{k\,,\,l}\;\hspace{6 mm}\mbox{for }k\geq 0,\,l\geq 1\,. 
    \end{array}
    \right.
    \]
    This shows that $c_{k\,,\,l}=S_k^l$ where the $S_k^l$ are the Stirling numbers of first kind. Hence
    \begin{eqnarray*}
     \left| g_p^{(k)}(x)\right|&\leqslant & x^{-k}(\log x-d)^p\sum_{l=0}^k \left|S_k^l\right|(p-l+1)_l(\log x-d)^{-l}\\
                               &\leqslant & x^{-k}(\log x-d)^p\sum_{l=0}^k \left|S_k^l\right|\left(\frac{p}{d}\right)^l
    \end{eqnarray*}
    since $(\log x -d)^{-l}\leqslant d^{-l}$ for $x\geqslant c\hspace{0.2 mm}e^{2d}$. Setting $y=\frac{p}{d}\geqslant 1$, we complete the proof of (\ref{derg}) by noting that
    \[
    \sum_{l=0}^k \left|S_k^l\right|y^l=(y)_k=
  (1+\frac{1}{y})(1+\frac{2}{y})\ldots(1+\frac{k-1}{y})y^k\leqslant k!\,y^k.
    \]
      By the general Leibniz rule we have
    \begin{eqnarray*}
   \phi_p^{(k)}(x)&=&(x^{-s})^{(k)}g_p(x)+\sum_{l=1}^k\binom{k}{l}(x^{-s})^{(k-l)}g_p^{(l)}(x)\\
                  &=&(-1)^k(s)_kx^{-s-k}g_p(x)(1+R)
    \end{eqnarray*}
    where
    \[
    \vert R\vert \leqslant \sum_{l=1}^k\frac{k^l}{\vert(s+k-l)_l\vert}\left(\frac{p}{d}\right)^l\leqslant \sum_{l=1}^k\left(\frac{kp}{td}\right)^l\ll d^{-1}.
    \]
  Since $N_2=c\,e^{2d}\left(1+O(t^{-1})\right)=c\frac{t}{2\pi}\left(1+O(t^{-1})\right)$ and $k\ll t^{\frac{1}{2}}$ we deduce that
  \begin{eqnarray*}
  \phi_p^{(k)}(N_2)&\ll& t^k\left| \left(\frac{1}{2t}+i\right)\left(\frac{3}{2t}+i\right)\ldots\left(\frac{2k-1}{2t}+i\right)\right|\left(c\,\frac{t}{2\pi}\right)^{-\frac{1}{2}-k}c^{\frac{p}{d}}d^p\\
  &\ll&\left(\frac{2\pi}{c}\right)^k t^{-\frac{1}{2}}c^{\frac{p}{d}}d^p
  \end{eqnarray*}
  and similarly
  \[
  \phi_p^{(k)}(N)\ll \left(\frac{t}{N}\right)^k N^{-\frac{1}{2}}\log^p N\ll \left(\frac{2\pi}{c}\right)^kN^{-\frac{1}{2}}\log^p N.
  \]
  Finally
  \[
  \int_{N_2}^N\vert \phi_p^{(2K)}(u)\vert\,du \ll\vert (s)_{2K}\vert \int_{N_2}^N u^{-\frac{1}{2}-2K}(\log u-d)^p\,du 
  \]
  \[
=\vert (s)_{2K}\vert \frac{u^{\frac{1}{2}-2K}}{\frac{1}{2}-2K}(\log u -d)^p\,\sum_{l=0}^{p}\left(\frac{p}{(2K-\frac{1}{2})(\log u -d )}\right)^l\left(\frac{p!}{p^l(p-l)!}\right)\Big{\vert}_{N_2}^N 
\]
\[
\ll \left(\frac{2\pi}{c}\right)^{2K}c^{\frac{p}{d}}d^p+\left(\frac{t}{N}\right)^{2K-\frac{1}{2}}\log^p N.
\]
    \end{proof}
     \begin{proof}[ Proof of Lemma~{\rm \ref{thirdsum}}]
    We set $N_2=N_1+1$ and we use the Euler-MacLaurin formula with $K\asymp t^{\frac{1}{2}}$ to get
     \begin{eqnarray*}
     \sum_{N_1< n \leq N}\phi_p(n)&=&\int_{N_2}^N\phi_p(u)\,du+\frac{1}{2}(\phi_p(N_2)+\phi_p(N))\\
     &+&\sum_{l=1}^K\frac{B_{2l}}{(2l)!}(\phi_p^{(2l-1)}(N)-\phi_p^{(2l-1)}(N_2)) +R_{2K}
     \end{eqnarray*}
     where
     \[
    \vert R_{2K}\vert\leqslant\frac{2\zeta(2K)}{(2\pi)^{2K}}\int_{N_2}^N\vert \phi_p^{(2K)}(u)\vert\,du.
     \]
     We have 
     \[
     \int_{N_2}^N\phi_p(u)\,du=-\frac{u^{1-s}}{s-1}(\log u -d)^p\, p!\,\sum_{l=0}^{p}\frac{((s-1)(\log u -d ))^{-l}}{(p-l)!}\Big{\vert}_{N_2}^N\\
     \]
     and we observe that
     \begin{eqnarray*}\label{C1}
     & &\frac{N_2^{1-s}}{s-1}(\log N_2 -d)^p\, p!\,\sum_{l=0}^{p}\frac{((s-1)(\log N_2 -d ))^{-l}}{(p-l)!}\\
      &=&\frac{N_2^{1-s}}{s-1}(\log N_2 -d)^p\sum_{l=0}^{p}\left(\frac{p}{(s-1)(\log N_2 -d )}\right)^l\left(\frac{p!}{p^l(p-l)!}\right)\\
      &=&\frac{N_2^{1-s}}{s-1}(\log N_2 -d)^p\left(1+O\left(t^{-\frac{1}{2}}\right)\right)\ll t^{-\frac{1}{2}}c^\frac{p}{d}d^p.
      \end{eqnarray*}
      Further, $\phi_p(N_2)\ll t^{-\frac{1}{2}}c^{\frac{p}{d}}d^p$, $\phi_p(N)\ll N^{-\frac{1}{2}}\log ^p N$ and since $B_{2l}=(-1)^{l-1}\frac{2(2l)!}{(2\pi)^{2l}}\zeta(2l)$ we have thanks to (\ref{derphi}) and (\ref{derphiN})
      \begin{eqnarray*}
      \sum_{l=1}^K\frac{B_{2l}}{(2l)!}(\phi_p^{(2l-1)}(N)-\phi_p^{(2l-1)}(N_2 ))&\ll& \sum_{l=1}^K\frac{1}{c^{2l-1}}\left(t^{-\frac{1}{2}}c^{\frac{p}{d}}d^p+N^{-\frac{1}{2}}\log^p N\right)\\
      &\ll&t^{-\frac{1}{2}}c^{\frac{p}{d}}d^p+N^{-\frac{1}{2}}\log^p N
      \end{eqnarray*}
      since $c>1$ is a fixed constant. Finally, since $K\asymp t^{\frac{1}{2}}$, we have $c^{-2K}\ll t^{-\frac{1}{2}}$ and we deduce from (\ref{intphi}) that  $R_{2K}\ll t^{-\frac{1}{2}}c^{\frac{p}{d}}d^p+N^{-\frac{1}{2}}\log^p N$.
     \end{proof}
     \begin{proof}[ Proof of Lemma~{\rm \ref{etaapp}}]
     We use the relations (\ref{B}) and (\ref{Sum}), (\ref{secondsum}) and (\ref{thirdsumbis}) with $c$ replaced by $c^{\frac{1}{2}}$, and we let $N$ tend to infinity to get
    \begin{eqnarray*}
   \tilde{\eta}_p\left(\theta'(t),\frac{1}{2}+it\right)&=&\sum_{1 \leq n \leq N_0}\frac{(\log n-\theta'(t))^p }{n^{\frac{1}{2}+it}}+ e^{- 2i\theta(t)}\sum_{1 \leq n \leq N_0}\frac{(\theta'(t)-\log n)^p }{n^{\frac{1}{2}-it}}\nonumber
    \\&+& O\left(t^{-\frac{1}{2}}p^2c^{\frac{p}{2\theta'(t)}}\theta'(t)^{p-2}\right).
     \end{eqnarray*}
     We note that $N_0$ can be replaced by $\sqrt{\frac{t}{2\pi}}$ without changing the order of the error term and to complete the proof, we use the relation $\eta(d,s)=(-1)^p\tilde{\eta}_p(d,s)$ and the inequality $x^2c^{\frac{x}{2}}\leqslant C_1c^x$ which holds for $x\geqslant 0$ and $C_1=16e^{-2}(\log c)^{-2}$ to check that $p^2c^{\frac{p}{2\theta'(t)}}\theta'(t)^{p-2}\leqslant C_1c^{\frac{p}{\theta'(t)}}\theta'(t)^p$.
     \end{proof}
     \begin{proof}[Proof of Lemma~{\rm \ref{lemquinquiesbis}}]
     The proof of this lemma is almost exactly the same as that of Lemma 7 of \cite{PBC1}. The only modification is at the end of the proof and it reads 
     \[
      \hspace{1mm}\sum_{p=0}^{k-2}\vert q_{p}(t)\vert\theta'(t)^{p}\hspace{1mm}\leq
      \]
     \begin{eqnarray*}
     & &\frac{k!}{t^k}\sum_{m=1}^{k-1}\frac{1}{m!}\left(\sum_{\nu=1}^k\frac{\left|\theta^{(\nu)}(t)\right|t^{\nu}}{\nu!}\right)^m
      \leq \frac{k!}{t^k}\sum_{m=1}^{k-1}\frac{1}{m!}\left(  t\theta'(t)+\frac{t}{2}\right)^m \leqslant\\
       & & \frac{k!}{t^k}\frac{1}{(k-1)!}\left(  t\theta'(t)+\frac{t}{2}\right)^{k-1}\left(\sum_{l=0}^{k-2}\left(\frac{k}{ t\theta'(t)+\frac{t}{2}}\right)^l\right)\ll\\
      & & \frac{k}{t}\theta'(t)^{k-1}\left(1+\frac{1}{2\theta'(t)}\right)^{k-1}\leqslant\frac{k}{t}e^{\frac{k}{2\theta'(t)}}\theta'(t)^{k-1}.
     \end{eqnarray*}
     \end{proof}
     \begin{proof}[ Proof of Theorem~{\rm \ref{MainTheorem}}]
      Thanks to (\ref{etafunc}), the first term of the right hand side of (\ref{Zk}) reads
     \[
     e^{i\theta(t)}i^k\eta_k(\theta'(t),\frac{1}{2}+it)=
     \]
      \[
      \hspace{-2mm} \sum_{1\leq n \leq \sqrt{\frac{t}{2\pi}}}\hspace{-2mm}\frac{(\theta'(t)-\log n )^k}{n^{\frac{1}{2}}}\left( e^{ i(\theta(t)-t\log n+k\frac{\pi}{2})}+
     e^{ -i(\theta(t)-t\log n+k\frac{\pi}{2})}\right)+O\left(t^{-\frac{1}{2}}c^{\frac{k}{\theta'(t)}}\theta'(t)^{k}\right)=
     \]
      \[
     2\hspace{-2mm}\sum_{1\leq n \leq \sqrt{\frac{t}{2\pi}}}\frac{1}{\sqrt{n}}\left ( \theta'(t)-\log n\right)^{k}\cos(\theta(t)-t\log n+k\frac{\pi}{2})
      +O\left(t^{-\frac{1}{2}}c^{\frac{k}{\theta'(t)}}\theta'(t)^{k}\right).
     \]
     For $\theta'(t)\leqslant p\leqslant 3\theta'(t)^2$, a trivial estimate of the right hand side of (\ref{etafunc}), with the choice $c=e^{\frac{1}{2}}$, leads to 
     \[
     \eta_p(\theta'(t),\frac{1}{2}+it)=O\left(t^{\frac{1}{4}}\theta'(t)^p\right)+O\left(t^{-\frac{1}{2}}e^{\frac{p}{2\theta'(t)}}\theta'(t)^{p}\right)=O\left(t^{\frac{1}{4}}\theta'(t)^p\right)
     \]
     since $ t^{-\frac{1}{2}}e^{\frac{p}{2\theta'(t)}}\leqslant t^{-\frac{1}{2}}e^{\frac{3}{2}\theta'(t)}\leqslant t^{\frac{1}{4}}$. In \cite{PBC1}, we proved that the same bound holds for $0\leqslant p\leqslant \theta'(t)$ and Lemma \ref{lemquinquiesbis} implies that the second term of the right hand side of (\ref{Zk}) satisfies
     \[
     e^{i\theta(t)}\sum_{p=0}^{k-2} q_p(t)i^p\eta_p(\theta'(t),\frac{1}{2}+it)=O\left(t^{-\frac{3}{4}}e^{\frac{k}{2\theta'(t)}}k\hspace{0.5 mm}\theta'(t)^{k-1}\right).
    \]
     \end{proof}

\end{document}